\theoremstyle{plain}
\numberwithin{Lem}{section}
\newtheorem{Prop}{Proposition}
\numberwithin{Prop}{section}
\newtheorem{Thm}{Theorem}
\numberwithin{Thm}{section}
\numberwithin{Cor}{section}
\numberwithin{Con}{section}
\theoremstyle{definition}
\newtheorem{Def}{Definition}
\numberwithin{Def}{section}
\numberwithin{hyp}{section}
\numberwithin{conj}{section}
\numberwithin{ex}{section}
\theoremstyle{remark}
\newtheorem{rem}{\bf{Remark}}
\numberwithin{rem}{section}
\numberwithin{equation}{section}
\newcommand{\dv}{\partial}
\newcommand{\Om}{\Omega}
\newcommand{\R}{{\mathbb R}}
\newcommand{\C}{{\mathbb C}}
\newcommand{\Cc}{\mathcal{C}}
\newcommand{\LL}{\mathcal{L}}
\newcommand{\BB}{\mathcal{S}}
\newcommand{\Dp}{\mathcal{D}^{\tiny+}}
\newcommand{\Dm}{\mathcal{D}^{\tiny-}}
\begin{document}

\vspace{.4cm}

\title{\sffamily A generalized Montel theorem for a class of first order elliptic equations with measurable coefficients}
\author{Erik Duse}
\date{}
\maketitle

\begin{abstract}
In this paper we prove a generalization of Montel's theorem for a class of first order elliptic equations with measurable coefficients involving Hodge--Dirac operators. We then apply this result to sequences of solutions of second order uniformly elliptic equations with measurable coefficients on divergence form and show that this results in a precompactness result for such sequences. 
\end{abstract}

\section*{\sffamily Introduction}

We begin by recalling a version of Montel's theorem from classical complex analysis.
\begin{Thm}
Let $\Om \subset \C$ be a domain and let $\mathscr{F}$ be a family of bounded holomorphic functions on $\Om$. The $\mathscr{F}$ contains a subsequence $\{f_n\}\subset \mathscr{F}$ that converge compactly to some $f\in \mathscr{F}$.
\end{Thm}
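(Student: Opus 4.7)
The plan is to combine the Cauchy integral formula with the Arzel\`a--Ascoli theorem and Weierstrass' theorem on uniform limits of holomorphic functions. The key insight, specific to the holomorphic setting, is that a uniform sup-norm bound on $\mathscr{F}$ forces a uniform pointwise bound on the derivatives of members of $\mathscr{F}$ on every compact subset of $\Omega$, which in turn yields equicontinuity; from there the result reduces to a standard compactness extraction.

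First I would fix a compact set $K \subset \Omega$ and set $\delta = \operatorname{dist}(K,\partial\Omega) > 0$. For any $z \in K$ and any $f \in \mathscr{F}$ with $\|f\|_\infty \leq M$, the Cauchy formula applied to the circle $\partial B(z,\delta/2) \subset \Omega$ gives $|f'(z)| \leq 2M/\delta$. Since this bound is uniform in $z \in K$ and in $f \in \mathscr{F}$, the mean value inequality along line segments contained in a slightly thickened compact set yields a uniform Lipschitz estimate, hence equicontinuity of $\mathscr{F}$ on $K$.

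Next I would exhaust $\Omega$ by compact sets $K_1 \subset K_2 \subset \cdots$ with $K_n \subset \operatorname{int}(K_{n+1})$ and $\bigcup_n K_n = \Omega$. Given an arbitrary sequence in $\mathscr{F}$, the Arzel\`a--Ascoli theorem applied on each $K_n$ together with a Cantor diagonal argument extracts a subsequence $\{f_n\}$ converging uniformly on every $K_n$, hence compactly on $\Omega$, to a continuous function $f$. By the Weierstrass convergence theorem (provable via Morera applied to triangles, or directly from the Cauchy kernel), the compact limit of holomorphic functions is holomorphic, so $f \in \mathcal{O}(\Omega)$; and since $|f_n| \leq M$ pointwise, also $|f| \leq M$, so $f$ is a bounded holomorphic function and belongs to $\mathscr{F}$ (reading $\mathscr{F}$ as the family of holomorphic functions on $\Omega$ satisfying the prescribed bound).

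The main obstacle is the first step: without the rigidity supplied by holomorphy, a uniform $L^\infty$ bound gives no control whatsoever on moduli of continuity, and an arbitrary uniformly bounded family of continuous functions certainly need not be normal. It is the Cauchy integral formula that upgrades the zero-order bound to a first-order bound; once this is in place, the remainder of the proof is purely functional-analytic and is the prototype for the more sophisticated compactness arguments that will appear later in the paper in the setting of Hodge--Dirac operators.
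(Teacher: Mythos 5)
Your argument is correct, and it is the standard proof of Montel's theorem: the Cauchy integral formula turns the uniform sup-norm bound into a uniform derivative bound on each compact subset, hence local equicontinuity; Arzel\`a--Ascoli plus a diagonal argument over a compact exhaustion then yields compact convergence of a subsequence, and Weierstrass (or Morera) gives holomorphy of the limit, which obeys the same bound. For comparison with the paper: there is nothing to compare, since the paper offers no proof of this statement --- it is recalled in the introduction as a classical fact. The closest in-paper material is the quoted theorem of B\"ar on weakly hypoelliptic operators (Theorem \ref{thm:WHypo}), whose proof rests on the interior estimate $\Vert u\Vert_{C^j(K)}\leq C\Vert u\Vert_{L^1(U)}$; your use of the Cauchy formula is precisely the classical instance of such an elliptic estimate, and in fact the same argument run with the mean-value property would give the stronger statement in which only a local $L^1$ bound is assumed, which is the form the paper actually uses later. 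Two caveats concern the statement rather than your proof, and you handled both sensibly: ``a family of bounded holomorphic functions'' must be read as a uniformly (or at least locally uniformly) bounded family, as you did by fixing a single $M$; and the assertion $f\in\mathscr{F}$ is only justified under your reading that $\mathscr{F}$ is the full family of holomorphic functions satisfying the prescribed bound (or is otherwise closed under compact limits) --- for a general family the correct conclusion is merely that the limit is holomorphic and satisfies the same bound.
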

Here converge compactly means that $f_n\to f$ uniformly on any compact subset $K\subset \Om$ as $n\to \infty$. Holomorphic functions are of course defined by the condition that 
\begin{align*}
\overline{\dv}f(z)=0,
\end{align*}
and $\overline{\dv}$ is the \emph{elliptic} Cauchy-Riemann operator. Montel's theorem can be generalized to solutions of a large class of partial differential operators with smooth coefficients acting on sections over manifolds. For simplicity we assume that we are on euclidean domains and we have vector valued fields rather than sections. Following \cite{B} we define:
\begin{Def}[Weak hypoellipticity]
Let $\Om\subset \R^n$ be a domain and let $E$ and $F$ be euclidean vector spaces and let $\mathcal{P}$ be a partial differential operator with smooth coefficients such that 
\begin{align*}
\mathcal{P}: C^\infty(\Om,E)\to C^\infty(\Om,F).
\end{align*}
We say that $\mathcal{P}$ is \emph{weakly} hypoelliptic if all solutions of $\mathcal{P}u=0$ are smooth. Furthermore, we set $\mathscr{H}(\Om,\mathcal{P})=\text{ker}(\mathcal{P})=\{u\in C^\infty(\Om,E): \mathcal{P}u=0\}$. 
\end{Def}

In \cite{B} the following Theorem 4 was proven.

\begin{Thm}\label{thm:WHypo}
Let $\mathcal{P}$ be a weakly hypoelliptic operator. Then any locally $L^1$-bounded sequence $\{u_j\}_j\in \mathscr{H}(\Om,\mathcal{P})$ contains a subsequence which converge in the $C^\infty$-topology to some $u\in \mathscr{H}(\Om,\mathcal{P})$.  
\end{Thm}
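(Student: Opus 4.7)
The plan is to reduce the $C^\infty$-precompactness statement to a quantitative interior estimate of the form
$\|u\|_{C^k(K)}\le C_{k,K,K'}\|u\|_{L^1(K')}$
for $u\in \mathscr{H}(\Om,\mathcal{P})$ and $K\subset\subset K'\subset\subset \Om$, and then to invoke Arzel\`a--Ascoli together with a diagonal extraction. The interior estimate itself will be obtained abstractly from weak hypoellipticity via the open mapping theorem for Fr\'echet spaces; no explicit parametrix or symbol computation is used.

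First I would verify that $\mathscr{H}(\Om,\mathcal{P})$ is closed in $L^1_{\mathrm{loc}}(\Om,E)$. If $u_j\to u$ in $L^1_{\mathrm{loc}}$ with each $u_j\in \mathscr{H}(\Om,\mathcal{P})$, then $\mathcal{P}$ is continuous from $\mathcal{D}'(\Om,E)$ to $\mathcal{D}'(\Om,F)$, so $\mathcal{P}u=0$ as a distribution. Interpreting weak hypoellipticity in the distributional sense---that every $u\in\mathcal{D}'(\Om,E)$ annihilated by $\mathcal{P}$ is smooth---then forces $u\in C^\infty(\Om,E)$, and hence $u\in \mathscr{H}(\Om,\mathcal{P})$. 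This step is the main conceptual obstacle: if one reads the definition too literally (only smooth solutions are smooth), the closedness fails and the whole argument collapses, so a careful distributional reading is essential.

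Next I would equip $\mathscr{H}(\Om,\mathcal{P})$ with two Fr\'echet topologies: the subspace topology inherited from $C^\infty(\Om,E)$, and the one inherited from $L^1_{\mathrm{loc}}(\Om,E)$. Both are Fr\'echet since $\mathscr{H}(\Om,\mathcal{P})$ is closed in each ambient space (the first because $\mathcal{P}$ is continuous in the $C^\infty$-topology, the second by the previous paragraph). The identity map from the $C^\infty$-topology to the $L^1_{\mathrm{loc}}$-topology is continuous because uniform convergence of all derivatives on compacts trivially implies $L^1$-convergence on compacts. The open mapping theorem for Fr\'echet spaces then forces the two topologies to coincide on $\mathscr{H}(\Om,\mathcal{P})$, and comparing the generating seminorms produces the desired interior estimate.

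Finally, a locally $L^1$-bounded sequence $\{u_j\}\subset \mathscr{H}(\Om,\mathcal{P})$ is by the interior estimate uniformly bounded in $C^k(K,E)$ for every $k\ge 0$ and every compact $K\subset \Om$. Arzel\`a--Ascoli extracts, for each such pair $(k,K)$, a subsequence converging in $C^{k-1}(K,E)$; a standard diagonal extraction over a compact exhaustion of $\Om$ and over $k$ then produces a single subsequence converging in the $C^\infty$-topology to some $u\in C^\infty(\Om,E)$. Passing to the limit in $\mathcal{P}u_j=0$ using continuity of $\mathcal{P}$ yields $\mathcal{P}u=0$, and hence $u\in \mathscr{H}(\Om,\mathcal{P})$, as required.
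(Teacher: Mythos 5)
Your proposal is correct and takes essentially the same route as the paper, which cites B\"ar's Lemma~2 estimate $\Vert u\Vert_{C^j(K)}\leq C\Vert u\Vert_{L^1(U)}$ for $u\in\mathscr{H}(\Om,\mathcal{P})$ and concludes by compactness; your derivation of that estimate from the open mapping theorem for Fr\'echet spaces (after checking closedness of the kernel in $L^1_{loc}$ under the distributional reading of weak hypoellipticity) is exactly the argument of \cite{B}. The only point to keep explicit is the one you already flag: the definition must be read as ``all distributional solutions are smooth,'' since otherwise the closedness step, and with it the whole scheme, fails.
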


The proof of Theorem 4 relies on the estimate 
\begin{align*}
\Vert u\Vert_{C^j(K)}\leq C\Vert u\Vert_{L^1(U)}
\end{align*}
valid for weakly hypoelliptic operators $\mathcal{P}$ and any $u\in \mathscr{H}(\Om,\mathcal{P})$ and any compact set $K\subset \Om$ and open set $U\Subset \Om$ containing $K$, see Lemma 2 in \cite{B}. 

In another direction, let $\Om \subset \C$ and consider the Beltrami equation 
\begin{align}\label{eq:B}
\overline{\dv}f(z)=\mu(z)\dv f(z)
\end{align}
where $\mu\in L^\infty(\Om,\C)$ such that $\vert \mu\vert_\infty<1$. This is an elliptic system with non-smooth coefficients for which the notion of hypoellipticity is meaningless. Yet, all solution $f\in W^{1,2}_{loc}(\Om,\C)$ of the Beltrami equation admits a factorization  
\begin{align*}
f(z)=h(g(z))
\end{align*}
where $h$ is any holomorphic function and $g\in W^{1,1}_{loc}(\Om)$ is a \emph{fixed homeomorphic solution} to \eqref{eq:B}. This is the famous Stoilow factorization theorem, see \cite[Thm. 5.5.1]{AIM}. Using the Stoilow factorization theorem one can prove a generalization of Montel's theorem for solutions of the Beltrami equation by reducing it to Montel's theorem for holomorphic functions. Namely, if $\{f_j\}_j\subset W^{1,2}_{loc}(\Om,\C)$ is a family of solutions of the Beltrami equation \eqref{eq:B} that is locally bounded in $L^1$, then there exists a fixed homeomorphic solution $g\in W^{1,1}_{loc}(\Om)$ of \eqref{eq:B} and a sequence of holomorphic functions $\{h_j\}_j$ locally bounded in $L^1$ such that $f_j=h_j\circ g$. By Montel's theorem for holomorphic functions there exists a subsequence $\{h_{j_k}\}_k$ that converge locally uniformly to a holomorphic function $h$. But then $f=h\circ g$ solves \eqref{eq:B}. Our aim now is to generalize this result to a class of first order elliptic systems in higher dimensions with merely measurable coefficients. This result will require slightly stronger assumptions and the result is also slightly less general in the sense that our limit function may depend on the choice of open subset $U\Subset \Om$ considered. This is proven in Theorem \ref{thm:main}. A consequence of this result is a precompactness result for solutions of second order uniformly elliptic equations on divergence form.

\subsection*{Acknowledgements}
Erik Duse was supported by the Knut and Alice Wallenberg Foundation KAW grant 2016.0416. The author thanks Kari Astala for useful suggestions improving the presentation of the paper.

\section*{\sffamily Dirac--Beltrami equation}
 
Let $\Lambda \R^n$ denote the exterior algebra equipped with the induced euclidean inner product form $\R^n$. Let $d$ denote the exterior derivative and $\delta=-d^\ast$ the interior derivative, the negative of the formal adjoint of $d$. Define the Hodge-Dirac operators
\begin{align*}
\mathcal{D}^\pm=d\pm \delta. 
\end{align*}
Then 
\begin{align*}
(\Dp)^2=-(\Dm)^2=\Delta, \quad \Dm \Dp=-\Dp \Dm,
\end{align*}
and $\Dp$ is formally skew-adjoint and $\Dm$ is formally self-adjoint. For a more comprehensive discussion of the Hodge--Dirac operators and their relation to Clifford algebras we refer the reader to \cite{D}. Let $\Om$ be a bounded domain in $\R^n$ and $\mathcal{M}\in L^\infty(\Om, \LL(\Lambda\R^n))$ satisfy 
\begin{align*}
M=\Vert \Vert \mathcal{M}(x)\Vert\Vert_{L^\infty}<1.
\end{align*}
Consider the \emph{Dirac--Beltrami equation}
\begin{align}\label{eq:DB}
\Dm F(x)=\mathcal{M}(x)\Dp F(x)
\end{align}
for $F\in W^{1,2}_{loc}(\Om,\Lambda):=W^{1,2}_{loc}(\Om,\Lambda \R^n)$. The condition that $M<1$ ensures that this system is elliptic. This can be seen as a generalization of the classical Beltrami equation in the plane and solutions of it has many features in common with solutions to the Beltrami equation, especially in the case when $F: \Om \to \Lambda^{ev}\R^n=\oplus_{j=0}\Lambda^{2j}\R^n$. In the same time, it is also a very different equation in the sense that $\text{dim}(\Lambda \R^n)=2^n>n=\text{dim}(\R^n)$. In particular, there is no Stoilow factorization theorem for solutions of \eqref{eq:DB}. This fact necessitates the need to develop new methods to study solutions and prove analogous theorems in higher dimension. In \cite{D} the following representation formula for solutions of \eqref{eq:DB} was proven for $F\in W^{1,2}(\Om,\Lambda)$ and where $\Om$ is a $C^2$-domain.
\begin{align*}
F(x)=\mathcal{C}_T^+\circ (I-\BB_T\mathcal{M})^{-1}\BB_T\mathcal{M}(\Dp H)+H,
\end{align*}
where $H$ is a solution to the boundary value problem 
\begin{align}\label{eq:Holo}
\Dm H(x)=0\,\,\, x\in \Om, \,\,\, \gamma_TH=\gamma_TF.
\end{align}
Here $\gamma_T$ denotes the \emph{tangential trace} and $\Cc_T^+$ is the \emph{tangential Cauchy transform} and  $\BB_T$ is the \emph{tangential Beurling transform} with respect to $\Om$, provided $\Om$ is at least a Lipschitz domain. We will omit the definitions of these integral operators but instead the refer the reader to \cite{D}. In fact the representation formula also holds for strongly Lipschitz domains but then $F$ belongs to the \emph{partial Sobolev space}
\begin{align}\label{eq:Holo}
W^{1,2}_{d,\delta}(\Om,\Lambda)=\{f\in \mathscr{D}'(\Om,\Lambda): F,dF,\delta F\in L^2(\Om,\Lambda)\}. 
\end{align}
instead.

The important point is that $\Cc_T^+$ is a weakly singular integral operator and $\BB_T$ is a singular integral operator. The condition $M<1$ now enters and ensures that the Neumann series 
\begin{align*}
(I-\BB_T\mathcal{M})^{-1}G=\sum_{j=0}^\infty\BB_T\mathcal{M}G
\end{align*} 
converges for any $G\in L^2(\Om,\Lambda)$. 
If we define the operator 
\begin{align*}
\mathcal{T}=\mathcal{C}_T^+\circ (I-\BB_T\mathcal{M})^{-1}\BB_T \mathcal{M}\circ \Dp 
\end{align*} 
the representation formula can be written 
 \begin{equation}\label{eq:StowG}
 \boxed{ F=(I+\mathcal{T})(H).}
  \end{equation}

We note that $H$ is unique up to an element in the relative Hodge-De Rham space $\mathcal{H}_T(\Om)=\{\omega\in W^{1,2}(\Om,\Lambda): d\omega=\delta\omega=0,\,\,\, \gamma_T\omega=0\}$. Note that whenever $\Omega$ is finitely connected we have $\text{dim}(\mathcal{H}_T(\Om))<+\infty$. Moreover, $\Dm$ can be viewed as a generalization of $\overline{\dv}$, and solutions of $\Dm H=0$ have many properties in common with holomorphic functions. Just as the Stoilow theorem, this sets a up a bijective (up to $\mathcal{H}_T(\Om)$) correspondence between solutions of \eqref{eq:DB} and solutions of $\Dm H=0$. In particular any $H\in W^{1,2}(\Om,\Lambda)$ solving $\Dm H=0$ give rise to a solution $F$ of \eqref{eq:DB} through $F=(I+\mathcal{T})(H)$. Moreover $\mathcal{T}:W^{1,2}(\Om,\Lambda)\to W^{1,2}(\Om,\Lambda)$ is a bounded operator, which if $M<<1$ can be viewed as a small perturbation of $H$ in the $W^{1,2}$-topology.  The purpose of the present work is to exploit this correspondence and show how properties of $H\in \mathscr{H}(\Om, \Dm)$ can be transfered to properties of $F$.

We first the recall the following Caccioppoli inequality that is proven in \cite{D}. Let $\Om \subset \R^n$ be a domain and let $F\in W^{1,2}_{loc}(\Om)$ be a solution of $\Dm F=\mathcal{M}\Dp F$ on $\Om$. Then for any $\varphi\in C^\infty_0(\Om)$ there exists a constant $C$ depending only on $n$ such that
\begin{align}\label{eq:Cap}
\int_{\R^n}\vert \varphi(x)\vert^{2}\vert \nabla \otimes F(x)\vert^{2}dx\leq C\int_{\R^n}\vert \nabla \varphi(x)\vert^{2}\vert F(x)\vert^{2}dx.
\end{align}
Let $U\Subset V\Subset \Om$ be open sets such that $\text{supp}(\varphi)\subset \overline{V}$ and $\varphi$ is chosen so that $\varphi(x)=1$ for all $x\in \overline{U}$. Then \eqref{eq:Cap} shows that 
there exists a constant $C'$ depending on $n$, $U$ and $V$ so that 
\begin{align}\label{eq:Cap2}
\Vert F\Vert_{W^{1,2}(U,\Lambda)}\leq C'\Vert F\Vert_{L^{2}(V,\Lambda)}. 
\end{align}

\begin{Prop}\label{prop:Mont}
Let $\Om \subset \R^n$ be a bounded $C^2$-domain. Let $\{F_j\}_j\subset W^{1,2}(\Om,\Lambda)$ be a sequence of solutions of 
\begin{align*}
\Dm F_j(x)=\mathcal{M}(x)\Dp F_j(x)
\end{align*}
for some $\mathcal{M}\in L^\infty(\Om,\LL(\R^n))$ such that $M:=\Vert \Vert \mathcal{M}(x)\Vert\Vert_\infty<1$. If
\begin{align*}
\sup_j\Vert F_j\Vert_{W^{1,2}}<+\infty,
\end{align*}
then there exists a subsequence $\{F_{j_k}\}_k\in W^{1,2}(\Om,\Lambda)$ that converges to some $F\in  W^{1,2}(\Om,\Lambda)$ and solves 
\begin{align*}
\Dm F(x)=\mathcal{M}(x)\Dp F(x). 
\end{align*}
\end{Prop}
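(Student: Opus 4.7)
The plan is to combine reflexivity of $W^{1,2}(\Om,\Lambda)$ with the Rellich--Kondrachov compact embedding into $L^2$, the linearity of the Dirac--Beltrami equation \eqref{eq:DB}, and the Caccioppoli estimate \eqref{eq:Cap2}. The representation formula $F=(I+\mathcal{T})(H)$ is not really needed at this stage, since the essential analytic content of a Montel-type statement for \eqref{eq:DB} is already packaged inside \eqref{eq:Cap2}. Concretely, since $\{F_j\}_j$ is norm-bounded in the reflexive Hilbert space $W^{1,2}(\Om,\Lambda)$, I first extract a subsequence $\{F_{j_k}\}_k$ converging weakly to some $F\in W^{1,2}(\Om,\Lambda)$, and Rellich--Kondrachov upgrades this to $F_{j_k}\to F$ strongly in $L^2(\Om,\Lambda)$.

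To verify that the limit $F$ satisfies \eqref{eq:DB}, I use that $\Dp,\Dm\colon W^{1,2}(\Om,\Lambda)\to L^2(\Om,\Lambda)$ are bounded and therefore weakly continuous, so $\Dp F_{j_k}\rightharpoonup \Dp F$ and $\Dm F_{j_k}\rightharpoonup \Dm F$ weakly in $L^2(\Om,\Lambda)$. Multiplication by the pointwise endomorphism $\mathcal{M}\in L^\infty(\Om,\LL(\Lambda\R^n))$ is a bounded operator on $L^2(\Om,\Lambda)$, hence weakly continuous as well, so $\mathcal{M}\Dp F_{j_k}\rightharpoonup \mathcal{M}\Dp F$. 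Passing to the weak limit on both sides of $\Dm F_{j_k}=\mathcal{M}\Dp F_{j_k}$ then gives $\Dm F=\mathcal{M}\Dp F$.

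Finally I upgrade the convergence to a strong local one using the Caccioppoli inequality. By linearity, $F_{j_k}-F_{j_\ell}$ solves the same homogeneous Dirac--Beltrami equation, and for any pair $U\Subset V\Subset \Om$ the estimate \eqref{eq:Cap2} gives
\begin{align*}
\Vert F_{j_k}-F_{j_\ell}\Vert_{W^{1,2}(U,\Lambda)}\leq C'\Vert F_{j_k}-F_{j_\ell}\Vert_{L^{2}(V,\Lambda)}\longrightarrow 0
\end{align*}
as $k,\ell\to\infty$, thanks to the strong $L^2$-convergence. Thus $\{F_{j_k}\}_k$ is Cauchy in $W^{1,2}(U,\Lambda)$, and exhausting $\Om$ by such pairs yields $F_{j_k}\to F$ in $W^{1,2}_{loc}(\Om,\Lambda)$. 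I do not foresee any serious obstacle here; the only mildly delicate point is that one should \emph{not} expect strong convergence on all of $\Om$ without extra boundary control, which fits with the introduction's warning that in the general nonlinear/higher dimensional setting the limit may depend on the compactly contained subdomain considered.
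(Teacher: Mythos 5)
Your argument is correct step by step, but it takes a genuinely different route from the paper. The paper does not use weak compactness at all: it exploits the $C^2$-regularity of $\Om$ to control the traces $\gamma F_j$, solves the boundary value problem $\Dm H_j=0$, $\gamma_T H_j=\gamma_T F_j$ to produce a uniformly $W^{1,2}$-bounded (hence locally $L^1$-bounded) sequence of $\Dm$-monogenic fields, extracts a $C^\infty_{loc}$-convergent subsequence via B\"ar's Montel theorem for weakly hypoelliptic operators (Theorem \ref{thm:WHypo}), and transports the limit back to a solution of \eqref{eq:DB} through the representation formula $F=(I+\mathcal{T})(H)$ of \eqref{eq:StowG}. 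You replace all of this by Hilbert-space weak compactness, Rellich--Kondrachov, weak continuity of $\Dp$, $\Dm$ and of multiplication by $\mathcal{M}$, and the Caccioppoli inequality \eqref{eq:Cap2} applied to differences of solutions; this is more elementary, needs neither the trace theorem nor the representation formula nor hypoellipticity, and it adapts immediately to give Theorem \ref{thm:main} (even with a subsequence independent of $U$, via an exhaustion and a diagonal argument). The one point of divergence is the topology of convergence: you obtain $F_{j_k}\rightharpoonup F$ weakly in $W^{1,2}(\Om,\Lambda)$, strongly in $L^2(\Om,\Lambda)$ and strongly in $W^{1,2}_{loc}(\Om,\Lambda)$, whereas the paper's route (through the claim $H_k\to H$ in $W^{1,2}(\Om,\Lambda)$ and boundedness of $I+\mathcal{T}$) is aimed at norm convergence in $W^{1,2}(\Om,\Lambda)$ up to the boundary. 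Your caution here is well founded: already in the model case $\mathcal{M}=0$, $n=2$, the holomorphic functions $f_j(z)=z^j/\sqrt{j}$ on the unit disc are bounded in $W^{1,2}$, converge to $0$ in $L^2$ and in $C^\infty_{loc}$, yet $\Vert \nabla f_j\Vert_{L^2}$ stays bounded away from $0$, so no subsequence converges in $W^{1,2}$ of the whole disc; interior strong convergence plus global weak convergence is the robust form of the conclusion, and it suffices for the application in Theorem \ref{thm:main} after shrinking $U$ slightly. So your proof establishes the proposition in this weak-global/strong-local reading by a shorter, self-contained argument, while the paper's trace/representation-formula approach is what one would need (and what it attempts) to get convergence up to $\dv\Om$.
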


\begin{proof}
Since $\Om$ is a $C^2$-domain the trace map $\gamma: W^{1,2}(\Om,\Lambda)\to W^{1/2,2}(\Om,\Lambda)$ is bounded. For each $j$ define $H_j\in W^{1,2}(\Om,\Lambda)$ through \eqref{eq:Holo}. In particular we have 
\begin{align*}
\Vert H_j\Vert_{W^{1,2}(\Om)}\leq C'\Vert \gamma H_j\Vert_{W^{1/2,2}(\dv \Om)}=C'\Vert \gamma F_j\Vert_{W^{1/2,2}(\dv \Om)}\leq C''\Vert  F_j\Vert_{W^{1,2}(\Om)}.
\end{align*}
Thus 
\begin{align*}
\sup_{j}\Vert H_j\Vert_{W^{1,2}(\Om)}<+\infty. 
\end{align*}
But this implies that $\sup_j\Vert H_j\Vert_{L^1(\Om)}<+\infty$. Since the Hodge--Dirac operator $\Dm$ is elliptic, it is in particular weakly hypoelliptic. By Theorem \ref{thm:WHypo}, $\{H_j\}_j$ contains a weakly converging subsequence $\{H_k\}_k$ that converge locally uniformly in $C^\infty$ to some $H\in C^\infty(\Om,\Lambda)$ which solves $\Dm H=0$ in $\Om$. But this implies that $H_k\to H$ in $W^{1,2}(\Om,\Lambda)$. Define 
\begin{align*}
F:=(I+\mathcal{T})(H). 
\end{align*}
Then $F$ solves \eqref{eq:DB} and satisfy $\gamma_TF=\gamma_TH$. 
\end{proof}

\begin{Thm}\label{thm:main}
Let $\Om \subset \R^n$ be a bounded domain. Let $\{F_n\}_n\subset W^{1,2}_{loc}(\Om,\Lambda)$ be a sequence of solutions of \eqref{eq:DB} for some $\mathcal{M}\in L^\infty(\Om,\LL(\R^n))$ such that $M:=\Vert \Vert \mathcal{M}(x)\Vert\Vert_\infty<1$. Assume that $\{F_j\}_j$ is locally bounded in $L^2$. Then for any smooth relatively compact subset $U\Subset \Om$ there exists a subsequence $\{F_{n_k}\}_k$ that converge in $W^{1,2}$ to some $F=F_U\in W^{1,2}(\Om)$ that solves \eqref{eq:DB}.
\end{Thm}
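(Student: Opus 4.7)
The plan is to localize the setting of Proposition \ref{prop:Mont} by sandwiching $U$ inside an auxiliary smooth subdomain and invoking the Caccioppoli inequality \eqref{eq:Cap2} to upgrade the local $L^2$-boundedness to a $W^{1,2}$-boundedness on that subdomain.

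More concretely, I would first choose open sets with $U \Subset V \Subset W \Subset \Omega$ where $V$ is a bounded $C^2$-domain. Such a $V$ can always be constructed by taking a suitable smooth tubular thickening of $\overline{U}$ inside $\Omega$ (for instance, a sublevel set of a mollified distance function to $\partial\Omega$, or one from a standard exhaustion of $\Omega$ by smooth relatively compact domains). Next, pick a cutoff $\varphi \in C^\infty_0(\Omega)$ with $\varphi \equiv 1$ on $\overline{V}$ and $\operatorname{supp}(\varphi) \subset \overline{W}$; inserting this into \eqref{eq:Cap} yields the Caccioppoli bound
\begin{align*}
\Vert F_n \Vert_{W^{1,2}(V,\Lambda)} \leq C' \Vert F_n \Vert_{L^2(W,\Lambda)},
\end{align*}
with $C'$ independent of $n$. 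Since $\{F_n\}_n$ is locally bounded in $L^2$ on $\Omega$ and $W \Subset \Omega$, the right-hand side is uniformly bounded, so $\sup_n \Vert F_n \Vert_{W^{1,2}(V,\Lambda)} < +\infty$.

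Now $V$ is a bounded $C^2$-domain and each $F_n|_V \in W^{1,2}(V,\Lambda)$ solves the Dirac--Beltrami equation on $V$ with coefficient $\mathcal{M}|_V$, which still satisfies $\Vert\,\Vert \mathcal{M}\Vert\,\Vert_{L^\infty(V)} < 1$. Therefore Proposition \ref{prop:Mont} applies and produces a subsequence $\{F_{n_k}|_V\}_k$ converging in $W^{1,2}(V,\Lambda)$ to some $F_V \in W^{1,2}(V,\Lambda)$ with $\mathcal{D}^- F_V = \mathcal{M} \mathcal{D}^+ F_V$ on $V$. Restricting the convergence to $U \Subset V$ gives $F_{n_k} \to F_U := F_V|_U$ in $W^{1,2}(U,\Lambda)$, and $F_U$ solves \eqref{eq:DB} on $U$, proving the theorem.

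I do not expect any real obstacle: the only mildly delicate point is the construction of the intermediate $C^2$-domain $V$, which is standard, and the observation that the subsequence and limit genuinely depend on $U$ through the choice of $V$ (since different $U$'s force different Caccioppoli localizations and different extractions from Proposition \ref{prop:Mont}), which is why the statement only promises $F = F_U$ on $U$ rather than a single global limit on $\Omega$. If one wants convergence simultaneously on an exhaustion $U_1 \Subset U_2 \Subset \cdots \nearrow \Omega$, a standard diagonal extraction gives a single subsequence and a limit $F \in W^{1,2}_{loc}(\Omega,\Lambda)$ solving \eqref{eq:DB}, but this is not required by the statement as written.
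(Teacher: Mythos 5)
Your argument is correct and is essentially the paper's own proof: both use the Caccioppoli inequality \eqref{eq:Cap2} to convert the local $L^2$-bound into a uniform $W^{1,2}$-bound on a relatively compact smooth subdomain and then invoke Proposition \ref{prop:Mont} there. The only (inessential) difference is that the paper applies Proposition \ref{prop:Mont} directly on the smooth set $U$ given in the hypothesis, whereas you interpose an auxiliary $C^2$-domain $V$ with $U \Subset V \Subset \Omega$ and restrict afterwards, which makes the smoothness assumption on $U$ itself superfluous.
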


\begin{proof}
By the Caccioppoli inequality \eqref{eq:Cap2}, for any $U\Subset \Om$ there exists an open set $V$ such that $U\Subset V\Subset \Om$ such that any solution $F\in W^{1,2}_{loc}(\Om)$ of \eqref{eq:DB} satisfy 
\begin{align*}
\Vert F\Vert_{W^{1,2}(U)}\leq \Vert F\Vert_{L^2(V)}. 
\end{align*}
Hence 
\begin{align*}
\sup_n\Vert F_n\Vert_{W^{1,2}(U)}\leq +\infty.
\end{align*}
By Proposition \ref{prop:Mont} there exists a subsequence $\{F_{n_j}\}_j$ that converge in $W^{1,2}$ to some $F_U\in W^{1,2}(U,\Lambda)$ and solves \eqref{eq:DB}. 
\end{proof}

\begin{rem}
It is not known to the author whether solutions to \eqref{eq:DB} are Hölder continuous or not. In a special case this is proven in Theorem 11.4 in \cite{D}. 
\end{rem}

\section*{\sffamily Applications}

\begin{Prop}\label{thm:main2}
Let $\Om\subset \R^n$ be a domain and assume that $\{u_n\}_n\in W^{1,2}_{loc}(\Om)$ is locally uniformly bounded in $L^2$ and solve \eqref{eq:Div} 
\begin{align}\label{eq:Div}
\text{div}\, A(x)\nabla u_j(x)=0
\end{align}
on $\Om$ for every $j$. Here $A\in L^\infty(\Om, \LL(\R^n))$ is normal, i.e., $[A^\ast(x),A(x)]=0$ and satisfy the ellipticity bounds 
\begin{align*}
\lambda \vert v\vert^2\leq \langle A(x)v,v\rangle, \quad \Vert A(x)\Vert<\Lambda 
\end{align*}      
for a.e. $x\in \Om$ and some $0<\lambda\leq \Lambda<\infty$. Then on any smooth simply connected $U\Subset \Om$ there exists a subsequence $\{u_{n_k}\}_k$ that converge to some $u\in W^{1,2}(U)$ and solves \eqref{eq:Div} on $U$. 
\end{Prop}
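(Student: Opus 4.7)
The plan is to reduce \eqref{eq:Div} on $U$ to a Dirac--Beltrami equation \eqref{eq:DB} and then invoke Proposition~\ref{prop:Mont}. First, the classical Caccioppoli inequality for uniformly elliptic divergence-form equations, combined with the $L^2_{loc}$-bound on $\{u_n\}$, gives $\sup_n\|u_n\|_{W^{1,2}(V)}<\infty$ for some $V$ with $U\Subset V\Subset\Om$. Next, the identity $\text{div}\,A\nabla u_n=0$ is equivalent to $\delta(A\,du_n)=0$, so $A\,du_n$ is a co-closed 1-form on $U$. Because $U$ is smooth and simply connected, a Hodge boundary-value problem on $U$ (for instance solving $\Delta\alpha_n=A\,du_n$ in the Coulomb gauge $\delta\alpha_n=0$ with a suitable tangential boundary condition, and setting $\omega_n:=d\alpha_n$) yields a \emph{conjugate} 2-form $\omega_n\in W^{1,2}(U,\Lambda^2)$ satisfying
\begin{equation*}
d\omega_n=0,\qquad \delta\omega_n=A\,du_n,\qquad \|\omega_n\|_{W^{1,2}(U)}\leq C(U,\Lambda)\,\|u_n\|_{W^{1,2}(V)}.
\end{equation*}

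Set $F_n:=u_n+\omega_n\in W^{1,2}(U,\Lambda^0\oplus\Lambda^2)$. Using $d\omega_n=0$ and $\delta\omega_n=A\,du_n$, a direct computation gives $\Dp F_n=du_n+\delta\omega_n=(I+A)\,du_n$ and $\Dm F_n=du_n-\delta\omega_n=(I-A)\,du_n$, both lying in $\Lambda^1$. Define $\mathcal{M}\in L^\infty(U,\LL(\Lambda\R^n))$ to act as $(I-A)(I+A)^{-1}$ on $\Lambda^1\R^n$ and as $0$ on all other grades. The elementary identity
\begin{equation*}
|(I+A(x))v|^2-|(I-A(x))v|^2 \;=\; 4\langle A(x)v,v\rangle \;\geq\; 4\lambda|v|^2,
\end{equation*}
combined with $\|A(x)\|\leq\Lambda$, yields $M:=\Vert\,\Vert\mathcal{M}(x)\Vert\,\Vert_{L^\infty(U)}<1$ with a quantitative bound depending only on $\lambda$ and $\Lambda$; the normality hypothesis on $A$ is what guarantees that $(I-A)(I+A)^{-1}$ extends to a bona fide element of $\LL(\Lambda\R^n)$ whose operator norm equals the one just computed. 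Hence each $F_n$ solves $\Dm F_n=\mathcal{M}\Dp F_n$ on $U$, and the previous estimates give $\sup_n\|F_n\|_{W^{1,2}(U)}<\infty$.

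Proposition~\ref{prop:Mont} now produces a subsequence $F_{n_k}\to F$ in $W^{1,2}(U,\Lambda)$ with $\Dm F=\mathcal{M}\Dp F$. Taking $u:=F|_{\Lambda^0}$ and reading off the $\Lambda^0$-component of the Dirac--Beltrami equation recovers $\text{div}(A\nabla u)=0$ on $U$; since the projection $F\mapsto F|_{\Lambda^0}$ is $W^{1,2}$-continuous, $u_{n_k}\to u$ in $W^{1,2}(U)$, as desired. The main obstacle is the conjugate-form construction: producing $\omega_n$ that is simultaneously closed and solves $\delta\omega_n=A\,du_n$ with uniform $L^2$ control requires solving an elliptic Hodge boundary-value problem on $U$, and this is precisely where the smooth simple-connectedness of $U$ (together with the attendant vanishing of the relevant de~Rham cohomology in the chosen boundary gauge) enters in an essential way.
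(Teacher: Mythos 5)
Your proposal follows what is essentially the paper's own route: pass to the Cayley transform $\mathcal{M}=(I-A)(I+A)^{-1}$, adjoin a conjugate $2$-form to turn \eqref{eq:Div} into the Dirac--Beltrami system \eqref{eq:DB}, and then invoke the generalized Montel theorem (the paper applies Theorem~\ref{thm:main} to $F_j=u_j+v_j$; you apply Proposition~\ref{prop:Mont} after first securing the uniform $W^{1,2}(U)$ bound, which is an immaterial variation). The real difference is that the paper outsources the two key ingredients to \cite{D} --- Lemma A.3 for $M<1$ (this is where normality of $A$ is used there) and Theorem 6.2 for the equivalence with \eqref{eq:DB} including the construction of $v_j\in W^{1,2}(U,\Lambda^2\R^n)$ --- whereas you re-derive them. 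Two caveats on your derivations. First, your norm computation on $\Lambda^1$ is correct, but it nowhere uses normality, and extending $\mathcal{M}$ by zero to the other grades needs no hypothesis at all; so your parenthetical attribution of the zero-extension to normality is misplaced (normality is what \cite{D} invokes in its Lemma A.3, not something your route requires). Second, and more substantively, the conjugate-form step is where your sketch is weaker than the citation it replaces: the obstruction to solving $d\omega_n=0$, $\delta\omega_n=A\,du_n$ with uniform bounds is not $H^1_{\mathrm{dR}}(U)$ but the degree-one harmonic fields with vanishing tangential trace, i.e.\ the degree-$1$ part of $\mathcal{H}_T(U)$, which is isomorphic to $H^{n-1}_{\mathrm{dR}}(U)$; equivalently one needs the fluxes of $A\nabla u_n$ through closed hypersurfaces in $U$ that do not bound inside $\Om$ to vanish. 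For $n\ge 3$ simple connectedness does not give this (a spherical shell around a hole of $\Om$, with $u_n$ a fundamental-solution-type solution, has nonzero flux), so your appeal to simple connectedness justifies the Hodge boundary-value problem only for $n=2$ or under a stronger topological assumption such as $H^{n-1}_{\mathrm{dR}}(U)=0$ (e.g.\ $U$ star-shaped). This imprecision is inherited from the wording of the statement itself and is exactly what the paper covers by citing \cite[Thm.~6.2]{D}, but a self-contained version of your argument should state and use the correct cohomological condition. The remaining steps --- the uniform bound on $F_n$, the application of Proposition~\ref{prop:Mont}, and reading off $\delta\omega=A\,du$, $d\omega=0$, hence $\delta(A\,du)=\delta^2\omega=0$ from the graded components of the limit equation --- are correct.
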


\begin{proof}
Let 
\begin{align*}
\mathcal{M}(x)=(I-A(x))\circ (I+A(x))^{-1}
\end{align*}
denote the Cayley transform of $A$. By Lemma A.3 in \cite{D} there exists an $M=M(\lambda,\Lambda)<1$ such that 
\begin{align*}
M=\Vert \Vert \mathcal{M}(x)\Vert\Vert_{L^\infty}.
\end{align*}
Moreover, on any smooth simply connected open $U\Subset \Om$ the equations \eqref{eq:Div} are equivalent to the equations 
\begin{align*}
\Dm F_j(x)=\mathcal{M}(x)\Dp F_j(x)
\end{align*}
for some $F_j=u_j+v_j$, where $v_j\in W^{1,2}(\Om, \Lambda^2\R^n)$. Applying Theorem \ref{thm:main} to the sequence $\{F_j\}$ and using Theorem 6.2 in \cite{D} proves the desired result.  
\end{proof}

{\sc Erik Duse}, Department of Mathematics and Statistics, KTH,  Stockholm, Sweden \texttt{duse@kth.se}


\begin{thebibliography}{10}


\bibitem{AIM}
{\sc K. Astala, T. Iwaniec and G. Martin}
Elliptic Partial Differential Equations and Quasiconformal Mappings in the Plane
\emph{Princeton University Press} (2009) NJ.




\bibitem{B}
{\sc C. Bär}
Some properties of solutions of weakly hypoelliptic equations
\emph{Int. J. Differ. Equ.}, Art. ID 526390 Band: 2013



\bibitem{D}
{\sc E. Duse}
Second Order Linear Elliptic Equations and Hodge-Dirac Operators
\emph{arXiv:2010.15699}





\end{thebibliography}
\end{document}